\newtheorem{theorem}{Theorem}
\newtheorem{corollary}[theorem]{Corollary}
\newenvironment{proof}[1][Proof]{\noindent\textbf{#1.} }{\ \rule{0.5em}{0.5em}}
\begin{document}

\title{On Routh-Steiner Theorem and Generalizations\thanks{%
MSC: 51N10}}
\author{Elias Abboud}
\maketitle

\begin{abstract}
Following Coxeter we use barycentric coordinates in affine geometry to prove
theorems on ratios of areas.

In particular, we prove a version of Routh-Steiner theorem for
parallelograms.
\end{abstract}

\section{Introduction}

Coxeter in his book \cite[p. 211]{Coxeter}, considered the following theorem
of \textit{affine} geometry;

\begin{theorem}
\label{Routh th}If the sides $BC,CA,AB$ of a triangle $ABC$ are divided at $%
L,M,N$ $\ $in the respective ratios $\lambda :1,$ $\mu :1,$ $\nu :1,$ the
cevians $AL,BM,CN$ form a triangle whose area is 
\[
\frac{(\lambda \mu \nu -1)^{2}}{(\lambda \mu +\lambda +1)(\mu \nu +\mu
+1)(\nu \lambda +\nu +1)} 
\]

times that of $ABC.$
\end{theorem}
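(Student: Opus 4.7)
The plan is to follow Coxeter's barycentric coordinates approach suggested in the abstract. I would place the reference triangle at $A=(1,0,0)$, $B=(0,1,0)$, $C=(0,0,1)$. Reading the ratios as $BL:LC=\lambda:1$, $CM:MA=\mu:1$, $AN:NB=\nu:1$, the division points get unnormalized barycentric coordinates
\[
L=(0,1,\lambda),\qquad M=(\mu,0,1),\qquad N=(1,\nu,0).
\]
A line in these coordinates is cut out by a single linear equation, so the three cevians are easily read off: $AL$ is $z=\lambda y$, $BM$ is $x=\mu z$, and $CN$ is $y=\nu x$. (Their product gives Ceva's condition $\lambda\mu\nu=1$ for concurrence, which matches the vanishing factor in the numerator.)

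Next I would solve pairwise. Substituting, the inner triangle $PQR$ formed by the cevians has the unnormalized barycentric vertices
\[
P=AL\cap BM=(\lambda\mu,\,1,\,\lambda),\quad Q=BM\cap CN=(\mu,\,\mu\nu,\,1),\quad R=CN\cap AL=(1,\,\nu,\,\lambda\nu),
\]
with coordinate sums $\lambda\mu+\lambda+1$, $\mu\nu+\mu+1$, and $\nu\lambda+\nu+1$ respectively; these are exactly the three factors in the denominator we are aiming for.

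The third step is to invoke the standard fact (which Coxeter develops in the pages leading up to the theorem and which I would cite) that for vertices given in \emph{normalised} barycentric coordinates the ratio $[PQR]/[ABC]$ equals the $3\times 3$ determinant of the coordinate matrix. Dividing each row by its sum pulls out the denominator above, so it remains to evaluate
\[
\det\begin{pmatrix}\lambda\mu & 1 & \lambda\\ \mu & \mu\nu & 1\\ 1 & \nu & \lambda\nu\end{pmatrix}.
\]
The one thing that needs checking is that this determinant collapses to $(\lambda\mu\nu-1)^{2}$. Expanding along the first row and grouping terms, the cross-term involving $\lambda(\mu\nu-\mu\nu)$ vanishes, and the two remaining contributions combine to $(\lambda\mu\nu-1)\cdot(\lambda\mu\nu-1)$. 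That algebraic collapse is the only real obstacle, and once it is done the area ratio reads precisely as stated. I would also remark on orientation: the determinant comes out positive for $\lambda\mu\nu>1$ and $PQR$ degenerates to a point exactly when $\lambda\mu\nu=1$, recovering Ceva's theorem as a sanity check.
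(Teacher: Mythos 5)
Your proposal is correct and is essentially the proof the paper relies on: it is Coxeter's barycentric-coordinate argument (cevian equations, pairwise intersection points, and the ratio $[PQR]/[ABC]$ computed as the coordinate determinant divided by the product of the row sums), which is exactly the method the paper cites for Theorem \ref{Routh th} and then reuses in its proof of Theorem \ref{1-lampda-1-th}. All the intermediate coordinates and the collapse of the determinant to $(\lambda\mu\nu-1)^{2}$ check out.
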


He emphasized that this result was discovered by Steiner, but simultaneously
cited two references the first was\ Steiner's work \cite[p. 163-168]{Steiner}
and the second was Routh's work: \cite[p. 82]{Routh}. Later in his book he
referred to the result as "Routh's theorem" \cite[p. 219]{Coxeter} admitting
to the contribution of both scientists in revealing the theorem.

Coxeter gave a general proof of this result using \textit{barycentric }%
coordinates attributed to M\"{o}bius\textit{.} These are \textit{homogeneous}
coordinates $(t_{1},t_{2},t_{3})$, where $t_{1},t_{2},t_{3}$ are masses at
the vertices of a triangle of reference $A_{1}A_{2}A_{3}$. In particular $%
(1,0,0)$ is $A_{1},$ $(0,1,0)$ is $A_{2}$, $(0,0,1)$ is $A_{3}$ and $%
(t_{1},t_{2},t_{3})$ corresponds to a point $P$ such that the areas of the
triangles $PA_{2}A_{3},PA_{3}A_{1},PA_{1}A_{2}$ are proportional to the
barycentric coordinates $t_{1},t_{2},t_{3}$ of $P$, respectively (see Fig. %
\ref{triangle-fig}). If $t_{1}+t_{2}+t_{3}=1$ then the normalized
barycentric coordinates $(t_{1},t_{2},t_{3})$ are called \textit{areal}
coordinates. In this case the areas of the triangles $%
PA_{2}A_{3},PA_{3}A_{1},PA_{1}A_{2}$ are $t_{1},t_{2},t_{3}$ times the area
of the whole triangle $A_{1}A_{2}A_{3}$, respectively.

Recently, A. B\'{e}nyi and B. \'{C}urgus \cite{Benyi-Curgus}, proved a
version of Theorem \ref{Routh th} and got a unification of the theorems of
Ceva and Menelaus. In fact they unified two expressions of Routh on area
ratios into one from which they derived both theorems of Ceva and Menelaus
as special cases.

Putting aside the search after "nice expressions", Theorem \ref{Routh th}
implies the following general property:

\begin{theorem}
\label{general division of triangle} If the sides of a triangle $%
A_{2}A_{3}A_{1}$ are divided at $A_{i,1},A_{i,2},...,A_{i,n-1},1\leq i\leq 3$
in the respective ratios $\lambda _{i,1}:\lambda _{i,2}:...:\lambda
_{i,n-1},1\leq i\leq 3,$ then the ratio of the area of any sub polygon to
the area of the whole triangle depends only on $\left\{ \lambda
_{i,j}\right\} ,$ where $1\leq i\leq 3$ and $1\leq j\leq n-1$ (see Fig. \ref%
{triangular-network} \ for the case $n=5).$
\end{theorem}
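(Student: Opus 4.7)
The plan is to leverage the affine-invariant nature of the problem: both the division ratio in which a point splits a segment and the ratio of areas of two planar regions are preserved by every non-degenerate affine transformation. Since any two triangles in the plane are related by such a transformation, and that transformation carries division points to division points with the same prescribed ratios, it must carry the entire network of connecting lines (and hence each sub-polygon) to its counterpart while preserving area ratios. Thus the ratio of the area of any specified sub-polygon to that of the ambient triangle is an affine invariant, and the only data entering the construction in an affinely invariant way is the collection $\{\lambda_{i,j}\}$.

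To make this concrete, I would work throughout in barycentric (or areal) coordinates with respect to $A_{1}A_{2}A_{3}$, as recalled in the introduction. Each division point $A_{i,j}$ lies on one side, so has one barycentric coordinate equal to $0$; the remaining two are determined rationally by the partial sums of $\lambda_{i,1},\ldots,\lambda_{i,n-1}$. Any line of the network joining two such division points then has an equation whose coefficients are polynomials in the $\lambda_{i,j}$, and any intersection point of two such lines, found by solving a $2\times 2$ linear system, has barycentric coordinates that are rational functions of the $\lambda_{i,j}$. Finally, the area of a polygon with vertices given by areal coordinates can be expressed as $[A_{1}A_{2}A_{3}]$ times a polynomial in those coordinates (via the standard $3\times 3$ determinantal area formula in barycentric coordinates combined with a shoelace-type sum). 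Composing these steps shows that the ratio of the sub-polygon area to $[A_{1}A_{2}A_{3}]$ is a rational function in $\{\lambda_{i,j}\}$ alone.

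The main obstacle is not computational but combinatorial: the statement tacitly refers to a specific structure (the triangular network of Fig.~\ref{triangular-network}) which prescribes exactly which division points are joined and hence which intersection points bound each sub-polygon. One must fix this combinatorial description precisely so that each sub-polygon's vertex list is identified as a determined tuple of intersection points in the network. Once that identification is in place, the barycentric machinery above applies uniformly, and the argument is complete without any further calculation; indeed, for a qualitative dependence statement such as Theorem~\ref{general division of triangle}, the one-line affine-invariance argument of the first paragraph already suffices, with the barycentric version serving to make the dependence explicitly rational.
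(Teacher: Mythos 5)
Your proposal is correct, and its primary argument---pure affine invariance---takes a genuinely different route from the paper's. The paper proceeds computationally: it defines the sub-polygon's vertices as intersections of the cevians $A_{i}A_{i,j}$, computes their barycentric coordinates by solving the pairs of cevian equations (so the coordinates are rational in the $\lambda_{i,j}$), triangulates the sub-polygon into non-overlapping triangles, and applies Coxeter's formula (a $3\times 3$ determinant of unnormalized coordinates divided by the product of the row sums) to each triangle; since every ingredient is a rational function of the $\lambda_{i,j}$, so is the area ratio. Your second paragraph is essentially this same computation (your normalized shoelace sum is equivalent to Coxeter's row-sum normalization), with one small mismatch: you describe the network lines as joining two division points, whereas the paper's sub-polygons are bounded by cevians joining a vertex $A_{i}$ to a division point on the opposite side---harmless, since a vertex also has constant barycentric coordinates. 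What your affine-invariance paragraph buys is a complete, computation-free proof of the qualitative statement: an affine map between two triangles with the same division data carries cevians to cevians, intersection points to intersection points, and sub-polygons to sub-polygons while preserving area ratios, so the ratio cannot depend on the triangle. What the paper's route buys is an effective procedure for computing the ratio as an explicit rational function of the $\lambda_{i,j}$, which it then exploits in Theorem \ref{1-lampda-1-th} and in the parallelogram theorem. Your observation that the combinatorial identification of the sub-polygon must be fixed is well taken; the paper itself handles the analogous existence question in the proof of Theorem \ref{1-lampda-1-th} by exactly your affine argument, reducing to the equilateral case.
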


A sub polygon of the triangle is defined as a polygon whose vertices are
points of intersections of the cevians $\left\{ A_{i}A_{i,j}\right\} ,1\leq
i\leq 3,1\leq j\leq n-1$.

To prove the theorem we compute first the barycentric coordinates for each
vertex of the sub polygon; these are points of intersections of the
barycentric equations of the cevians $\left\{ A_{i}A_{i,j}\right\} ,1\leq
i\leq 3,1\leq j\leq n-1$. Next we divide the sub polygon into
non-overlapping triangles (in Fig. \ref{triangular-network} the polygon $%
MNOPQR$ is divided into $4$ triangles). Then we use the method which Coxeter
gave in his book \cite[p. 219]{Coxeter} for proving Theorem \ref{Routh th}
(which will be illustrated in the next section) to conclude that the ratio
of the area of each triangle to the area of the whole triangle depends only
on $\left\{ \lambda _{i,j}\right\} $ and the result follows.

\section{Patterns}

The search after nice expressions of area ratios succeeds in "symmetric"
divisions. In particular we have the following theorem:

\begin{theorem}
\label{1-lampda-1-th}Suppose the sides of a triangle $A_{2}A_{3}A_{1}$ are
divided at $A_{i,1},A_{i,2},1\leq i\leq 3$ in the respective ratios $%
1:\lambda :1$ (see Fig. \ref{1-lampda-1}). Let $I,J,K,L,M,N$ be the points
of intersection of the corresponding cevians as shown in the table:%
\[
\begin{array}{cc}
\text{point} & \text{cevians intersection} \\ 
I & A_{1}A_{1,1}\cap A_{3}A_{3,2} \\ 
J & A_{2}A_{2,1}\cap A_{3}A_{3,2} \\ 
K & A_{2}A_{2,1}\cap A_{1}A_{1,2} \\ 
L & A_{3}A_{3,1}\cap A_{1}A_{1,2} \\ 
M & A_{2}A_{2,2}\cap A_{3}A_{3,1} \\ 
N & A_{1}A_{1,1}\cap A_{2}A_{2,2}%
\end{array}%
\]

Then the ratio of the area of the hexagon $IJKLMN$ to the area of the
triangle $A_{2}A_{3}A_{1}$ is 
\[
\frac{2\lambda ^{2}}{(3+\lambda )(2\lambda +3)}. 
\]
\end{theorem}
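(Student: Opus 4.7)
The plan is to apply the barycentric-coordinate method that Coxeter used for Theorem \ref{Routh th}, now to the six cevians at hand. Working in areal coordinates with respect to $A_1A_2A_3$, each vertex becomes a standard basis vector, and the ratio $1:\lambda:1$ gives the six division points immediately: on side $A_2A_3$ one has $A_{1,1}=(0,\lambda+1,1)$ and $A_{1,2}=(0,1,\lambda+1)$ in homogeneous form, and the remaining four points are obtained by cyclic permutation of the indices. The line through $A_i$ and $A_{i,j}$ is then read off from the usual $3\times 3$ collinearity determinant, yielding six homogeneous linear equations in $(t_1,t_2,t_3)$.

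Next I would compute the six hexagon vertices $I,J,K,L,M,N$ by pairing the relevant cevian equations in accordance with the table and solving the resulting $2\times 2$ linear systems, normalizing so that $t_1+t_2+t_3=1$. A useful observation is that the cyclic permutation $A_i\mapsto A_{i+1}$ of indices (mod $3$) sends the defining intersections into one another so that $I\mapsto K\mapsto M\mapsto I$ and $J\mapsto L\mapsto N\mapsto J$. Hence only two genuine vertex computations are actually needed; the other four are recovered by cyclically rotating the areal coordinates of $I$ and of $J$.

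To finish, I would triangulate the hexagon as $\triangle IKM$ together with the three ``ear'' triangles $\triangle IJK$, $\triangle KLM$, $\triangle MNI$. By the cyclic symmetry the three ears have equal area, so
\[
\mathrm{Area}(IJKLMN)=\mathrm{Area}(\triangle IKM)+3\,\mathrm{Area}(\triangle IJK),
\]
and each summand, as a ratio to $\mathrm{Area}(A_1A_2A_3)$, equals the absolute value of the $3\times 3$ determinant formed from the areal coordinates of its vertices, using precisely the formula Coxeter invokes for Theorem \ref{Routh th}.

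The main obstacle is not the structural setup but the algebraic simplification at this last step: two rational functions of $\lambda$ with heavy polynomial denominators must collapse to $\dfrac{2\lambda^{2}}{(3+\lambda)(2\lambda+3)}$. As a sanity check I would evaluate at $\lambda=1$, the equal-trisection case, where the proposed formula yields $\dfrac{2}{4\cdot 5}=\dfrac{1}{10}$, in agreement with Marion Walter's classical hexagon identity; matching this and the degeneration $\lambda\to\infty$ would give strong evidence that the simplification has been carried out correctly.
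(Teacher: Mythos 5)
Your proposal is correct and rests on the same engine as the paper's proof: homogeneous barycentric coordinates for the division points, the $3\times 3$ collinearity determinant for the cevians, the areal-coordinate determinant for area ratios, and a symmetry reduction coming from the fact that cyclically permuting barycentric coordinates is an area-preserving affine map of the triangle (your orbit computation $I\mapsto K\mapsto M\mapsto I$, $J\mapsto L\mapsto N\mapsto J$ checks out against the table). Where you differ is the decomposition: the paper places the centroid $G=(\tfrac13,\tfrac13,\tfrac13)$ inside the hexagon and uses the full dihedral symmetry of the $1:\lambda:1$ division (rotations \emph{and} the reflections fixing each vertex) to argue that $IJKLMN$ has six times the area of the single triangle $GMN$, so only the two vertices $M$, $N$ and one determinant are needed. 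You instead triangulate as $\triangle IKM$ plus three ears and invoke only the $3$-fold rotational symmetry, which costs you a second determinant ($\triangle IKM$ in addition to $\triangle IJK$) but spares you the reflection argument and the need to introduce $G$ at all; either way one must justify that the chosen decomposition is non-overlapping, which both you and the paper do via affine equivalence to the equilateral case. Your plan stops short of the actual elimination and simplification, but every remaining step is a routine $2\times 2$ solve or determinant evaluation, and your checks at $\lambda=1$ (Marion Walter's $\tfrac1{10}$) and $\lambda\to\infty$ (ratio tending to $1$) are the right guards against algebra slips.
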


\begin{proof}
At first note that such a hexagon exists in any triangle since it exists in
an equilateral triangle, by symmetry of the division ratios, and every other
triangle is affine equivalent to an equilateral triangle. Let $G$ be the
centre of gravity of $A_{2}A_{3}A_{1}.$ Since the barycentric coordinates of 
$A_{1},A_{2},A_{3}$ are $(1,0,0),(0,1,0),(0,0,1)$ respectively, the
barycentric coordinates of $G$ are $(\frac{1}{3},\frac{1}{3},\frac{1}{3}).$
By symmetry, the area of the hexagon $IJKLMN$ is $6$ times the area of the
triangle $GMN.$ Therefore, it is sufficient to compute the barycentric
coordinates of $M$ and $N.$

In order to find the barycentric equations of the corresponding cevians, we
exhibit the barycentric coordinates of some of the division points. Now, $%
A_{1,1}$ divides $A_{2}A_{3}$ in the ratio $1:\lambda +1,$ $A_{2,2}$ divides 
$A_{3}A_{1}$ in the ratio $\lambda +1:1$ and $A_{3,1}$divides $A_{1}A_{2}$
in the ratio $1:\lambda +1.$ Thus the barycentric coordinates are given in
the table: 
\[
\begin{array}{cc}
\text{point} & \text{barycentric coordinates} \\ 
A_{1,1} & (1,0,\lambda +1) \\ 
A_{2,2} & (1,\lambda +1,0) \\ 
A_{3,1} & (0,\lambda +1,1)%
\end{array}%
. 
\]

We proceed by computing the equations of the cevians whose points of
intersection are $M$ and $N.$ The cevian $A_{2}A_{2,2}$ has the equation 
\[
\left\vert 
\begin{array}{ccc}
0 & 0 & 1 \\ 
1 & \lambda +1 & 0 \\ 
t_{1} & t_{2} & t_{3}%
\end{array}%
\right\vert =0 
\]%
which is equivalent to $-(\lambda +1)t_{1}+t_{2}=0.$ The cevian $%
A_{1}A_{1,1} $ has the equation 
\[
\left\vert 
\begin{array}{ccc}
0 & 1 & 0 \\ 
1 & 0 & \lambda +1 \\ 
t_{1} & t_{2} & t_{3}%
\end{array}%
\right\vert =0 
\]%
which is equivalent to $-(\lambda +1)t_{1}+t_{3}=0,$ and the cevian $%
A_{3}A_{3,1}$ has the equation 
\[
\left\vert 
\begin{array}{ccc}
1 & 0 & 0 \\ 
0 & \lambda +1 & 1 \\ 
t_{1} & t_{2} & t_{3}%
\end{array}%
\right\vert =0 
\]%
which is equivalent to $-t_{2}+(\lambda +1)t_{3}=0.$ Hence, the point $M$
can be computed by the following system of equations;%
\[
\left\{ 
\begin{array}{c}
-(\lambda +1)t_{1}+t_{2}\ \ \ \ \ \ \ \ \ \ \ \ \ \ \ =0\ \ \ \ \  \\ 
\ \ \ \ \ \ \ \ \ \ \ \ \ -t_{2}+(\lambda +1)t_{3}=0%
\end{array}%
\right. . 
\]

Substituting $t_{2}=1$ we get, 
\[
M=\left( \frac{1}{\lambda +1},1,\frac{1}{\lambda +1}\right) . 
\]

Similarly, the point $N$ is obtained from the following equations;

\[
\left\{ 
\begin{array}{c}
-(\lambda +1)t_{1}\ \ \ \ \ \ \ \ \ \ \ \ \ \ +t_{3}=0 \\ 
-(\lambda +1)t_{1}+t_{2}\ \ \ \ \ \ \ \ \ \ \ \ \ \ =0%
\end{array}%
\right. . 
\]

Substituting $t_{1}=1$ we get, 
\[
N=\left( 1,\lambda +1,\lambda +1\right) . 
\]

Consequently, the ratio of the area of the triangle $GMN$ to the area of the
triangle $A_{2}A_{3}A_{1}$ is equal to the determinant 
\[
\left\vert 
\begin{array}{ccc}
1/3 & 1/3 & 1/3 \\ 
\frac{1}{\lambda +1} & 1 & \frac{1}{\lambda +1} \\ 
1 & \lambda +1 & \lambda +1%
\end{array}%
\right\vert =\frac{\lambda ^{2}}{3(\lambda +1)} 
\]

divided by the product of the sums of the rows;%
\[
\left( 1+\frac{2}{\lambda +1}\right) \left( 2\lambda +3\right) . 
\]

Therefore, the ratio of the area of the hexagon $IGKLMN$ to the area of the
triangle $A_{2}A_{3}A_{1}$ is%
\[
\frac{\frac{6\lambda ^{2}}{3(\lambda +1)}}{\left( 1+\frac{2}{\lambda +1}%
\right) \left( 2\lambda +3\right) }=\frac{2\lambda ^{2}}{\left( \lambda
+3\right) \left( 2\lambda +3\right) }, 
\]

in agreement with the statement of the theorem.
\end{proof}

\subsection{Special cases}

\begin{enumerate}
\item If $\lambda =1$ then the ratio of the area of the hexagon to the area
of the triangle is $\frac{1}{10}.$ This special case is referred to as
Marion Walter's theorem \cite{Walter} which states the following: \textit{If
the trisection points of the sides of any triangle are connected to the
opposite vertices, the resulting hexagon has one-tenth the area of the
original triangle. }

\item If $n$ is odd, $n=2k+1,$ then taking $\lambda =\frac{1}{k}$ implies
that the ratio of the area of the hexagon to the area of the triangle is 
\[
\frac{2}{(3k+1)(3k+2)}=\frac{8}{9n^{2}-1}. 
\]%
This special case is referred to as Morgan's theorem \cite{Morgan}, which
was proved by T. Watanabe, R. Hanson and F. D. Nowosielski \cite{Watanabe}
using Routh-Steiner theorem several times.

\item If $n$ is even, then taking $\lambda =n-2$ implies a new result which
was not mentioned in the previous discussion. In this case, we have the
following:

\textit{The ratio of the area of the hexagon to the area of the triangle is} 
\[
\frac{2(n-2)^{2}}{(n+1)(2n-1)}. 
\]
\end{enumerate}

Note that, if each side of the triangle is divided into $n$ equal parts,
then for odd $n$, $n=2k+1,$ the hexagon is the sub polygon in Theorem \ref%
{general division of triangle}, formed by the cevians $\left\{
A_{i}A_{k,i},A_{i}A_{k+1,i}\right\} ,1\leq i\leq 3.$ While for even $n$, the
hexagon is formed by the cevians $\left\{
A_{i}A_{1,i},A_{i}A_{n-1,i}\right\} ,1\leq i\leq 3.$

\section{Parallelograms}

We can generalize the Routh-Steiner theorem to parallelograms in the
following manner:

\begin{theorem}
If the sides $BC,CD,DA,AB$ of a parallelogram $ABCD$ are divided at $K,L,M,N$
$\ $in the respective ratios $\kappa :1,\lambda :1,$ $\mu :1,$ $\nu :1,$ the
cevians $AK,BL,CM,DN$ form a quadrilateral whose area is 
\begin{eqnarray}
&&\frac{1}{2}\frac{(\frac{\mu }{1+\nu }+\frac{1+\mu }{\lambda })(1+\frac{%
\kappa }{1+\mu }+\frac{\kappa +1}{\nu (1+\mu )})}{(2+\frac{1}{\lambda }-%
\frac{1}{1+\mu })(1+\mu +\frac{\nu \mu }{1+\nu })(1+2\kappa +\frac{\kappa +1%
}{\nu })}+  \nonumber \\
&&\frac{1}{2}\frac{(\frac{\kappa +1}{\nu }+\frac{\kappa }{1+\lambda })(1+%
\frac{1}{\lambda }+\frac{\kappa }{1+\mu })}{(2+\frac{1}{\lambda }-\frac{1}{%
1+\mu })(1+\kappa +\frac{\lambda \kappa }{1+\lambda })(1+2\kappa +\frac{%
\kappa +1}{\nu })}  \label{eq1}
\end{eqnarray}

times that of $ABCD.$
\end{theorem}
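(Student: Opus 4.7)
The plan is to mirror Coxeter's barycentric-coordinate strategy used in Theorems 1 and 3, but adapted to the parallelogram setting. Since area ratios are affine invariants, I would first apply an affine transformation placing $ABCD$ at the standard parallelogram with vertices $A=(0,0)$, $B=(1,0)$, $C=(1,1)$, $D=(0,1)$, reducing the problem to a purely algebraic calculation in Cartesian coordinates. An equivalent route would be to split $ABCD$ along the diagonal $BD$ into two reference triangles and work barycentrically in each, but the Cartesian route is notationally lighter while preserving the spirit of Coxeter's proof.

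Next I would read off the affine coordinates of the division points directly from the stated ratios: $K=(1,\kappa/(1+\kappa))$, $L=(1/(1+\lambda),1)$, $M=(0,1/(1+\mu))$, $N=(\nu/(1+\nu),0)$. From these I would write implicit equations for each of the four cevians $AK$, $BL$, $CM$, $DN$, and then solve the four $2\times 2$ linear systems that yield the inner vertices of the quadrilateral, namely $P_1=AK\cap BL$, $P_2=BL\cap CM$, $P_3=CM\cap DN$, $P_4=DN\cap AK$.

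The structure of the target formula (\ref{eq1}) --- a sum of two fractions, each carrying the factor $\tfrac{1}{2}$ --- strongly suggests the final step: split the inner quadrilateral along the diagonal $P_1P_3$ (or $P_2P_4$) into two triangles $P_1P_2P_3$ and $P_1P_3P_4$, and apply the shoelace formula $\tfrac{1}{2}|\det(P_2-P_1,\,P_3-P_1)|$ to each. The common denominator factor $(2+1/\lambda-1/(1+\mu))$ appearing in both terms of the formula is consistent with this decomposition, since it should encode the normalization associated with the shared diagonal line. Summing the two triangle areas and rearranging should recover the claimed expression.

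The main obstacle is not conceptual but algebraic: the four intersection coordinates are ratios with cubic-degree numerators and denominators in $\kappa,\lambda,\mu,\nu$, and converting the final sum into the specific mixed presentation of (\ref{eq1}) --- with blocks like $\mu/(1+\nu)+(1+\mu)/\lambda$ --- will require either a judicious grouping of terms or verification by computer algebra. A secondary subtlety is to check nondegeneracy, i.e.\ that the four cevians do bound a quadrilateral with the expected cyclic ordering of vertices; by affine equivalence this only needs to be verified for one convenient choice of parameters.
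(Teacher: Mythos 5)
Your proposal is correct and follows essentially the same route as the paper: both compute the four cevian intersection points, split the inner quadrilateral into two triangles along a diagonal, and evaluate each triangle's area as a (normalized) $3\times 3$ determinant. The only real difference is cosmetic --- the paper stays in barycentric coordinates relative to triangle $ABC$ (so $D=(1,1,-1)$ and the parallelogram has area $2$ times the reference triangle), whereas you pass to Cartesian coordinates on the unit square; your guess that the shared factor $2+\frac{1}{\lambda}-\frac{1}{1+\mu}$ reflects a vertex common to both triangles is exactly how it arises in the paper, as the normalizing row sum of $X=BL\cap CM$, which lies on the chosen diagonal.
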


\begin{proof}
If the barycentric coordinates of $A,B,C$ are $(0,1,0),(0,0,1),(1,0,0)$
respectively, then the barycentric coordinates of $D$ are $(1,1,-1).$ This
is true since the diagonals in the parallelogram bisect each other and the
barycentric coordinates of the midpoint of $AC$ are $(\frac{1}{2},\frac{1}{2}%
,0)$ (see Fig. \ref{parallel-fig}). The barycentric coordinates of $K,L,M,N$
are shown in the following table:

\[
\begin{array}{cc}
\text{point} & \text{barycentric coordinates} \\ 
K & (\kappa ,0,1) \\ 
L & (\lambda +1,\lambda ,-\lambda ) \\ 
M & (1,\mu +1,-1) \\ 
N & (0,1,\nu )%
\end{array}%
. 
\]

Proceeding as in the proof of Theorem \ref{1-lampda-1-th}, the cevians $%
AK,BL,CM,DN$ have the following equations:%
\[
\begin{array}{cc}
\text{cevian} & \text{barycentric equation} \\ 
AK & -t_{1}+\kappa t_{3}=0 \\ 
BL & -\lambda t_{1}+(\lambda +1)t_{2}=0 \\ 
CM & t_{2}+(\mu +1)t_{3}=0 \\ 
DN & (1+\nu )t_{1}-\nu t_{2}+t_{3}=0%
\end{array}%
. 
\]

Now, let $X,Y,Z,W$ be the points of intersections of pairs of cevians $BL$
and $CM,$ $CM$ and $DN,$ $DN$ and $AK,$ $AK$ and $BL,$ respectively (see
Fig. \ref{parallel-fig}).

Then the barycentric coordinates of these points are given in the following
table:%
\[
\begin{array}{cc}
\text{point} & \text{barycentric coordinates} \\ 
X & (\frac{\lambda +1}{\lambda },1,\frac{-1}{\mu +1}) \\ 
Y & (-\frac{1+\nu +\nu \mu }{1+\nu },-\mu -1,1) \\ 
Z & (\kappa ,\frac{1+\kappa +\kappa \nu }{\nu },1) \\ 
W & (\kappa ,\frac{\lambda \kappa }{\lambda +1},1)%
\end{array}%
. 
\]

The area of the quadrilateral $XYZW$ equals the sum of the areas of the
triangles $XYZ$ and $ZWX.$ Normalizing the barycentric coordinates of $%
X,Y,Z,W$ and dividing by $2$ which is the area of the parallelogram $ABCD,$
we get that the ratio of the area of the quadrilateral $XYZW$ to the area of
the parallelogram $ABCD$ equals $\frac{1}{2}\left( r_{1}+r_{2}\right) $
where, 
\begin{equation}
r_{1}=-\frac{\left\vert 
\begin{array}{ccc}
\frac{\lambda +1}{\lambda } & 1 & \frac{-1}{\mu +1} \\ 
-\frac{1+\nu +\nu \mu }{1+\nu } & -\mu -1 & 1 \\ 
\kappa & \frac{1+\kappa +\kappa \nu }{\nu } & 1%
\end{array}%
\right\vert }{(2+\frac{1}{\lambda }-\frac{1}{1+\mu })(1+\mu +\frac{\nu \mu }{%
1+\nu })(1+2\kappa +\frac{\kappa +1}{\nu })}  \label{eq2}
\end{equation}%
and 
\begin{equation}
r_{2}=\frac{\left\vert 
\begin{array}{ccc}
\kappa & \frac{1+\kappa +\kappa \nu }{\nu } & 1 \\ 
\kappa & \frac{\lambda \kappa }{\lambda +1} & 1 \\ 
\frac{\lambda +1}{\lambda } & 1 & \frac{-1}{\mu +1}%
\end{array}%
\right\vert }{(2+\frac{1}{\lambda }-\frac{1}{1+\mu })(1+\kappa +\frac{%
\lambda \kappa }{1+\lambda })(1+2\kappa +\frac{\kappa +1}{\nu })}.
\label{eq3}
\end{equation}

Note that : 
\[
\left\vert 
\begin{array}{ccc}
\frac{\lambda +1}{\lambda } & 1 & \frac{-1}{\mu +1} \\ 
-\frac{1+\nu +\nu \mu }{1+\nu } & -\mu -1 & 1 \\ 
\kappa & \frac{1+\kappa +\kappa \nu }{\nu } & 1%
\end{array}%
\right\vert =\left\vert 
\begin{array}{ccc}
\frac{\lambda +1}{\lambda } & 1 & \frac{-1}{\mu +1} \\ 
-\frac{1+\nu +\nu \mu }{1+\nu }+(\mu +1)\frac{\lambda +1}{\lambda } & 0 & 0
\\ 
\kappa & \frac{1+\kappa +\kappa \nu }{\nu } & 1%
\end{array}%
\right\vert 
\]

and 
\[
\left\vert 
\begin{array}{ccc}
\kappa & \frac{1+\kappa +\kappa \nu }{\nu } & 1 \\ 
\kappa & \frac{\lambda \kappa }{\lambda +1} & 1 \\ 
\frac{\lambda +1}{\lambda } & 1 & \frac{-1}{\mu +1}%
\end{array}%
\right\vert =\left\vert 
\begin{array}{ccc}
0 & \frac{1+\kappa +\kappa \nu }{\nu } & 1 \\ 
0 & \frac{\lambda \kappa }{\lambda +1} & 1 \\ 
\frac{\lambda +1}{\lambda }+\frac{\kappa }{\mu +1} & 1 & \frac{-1}{\mu +1}%
\end{array}%
\right\vert . 
\]

After evaluating the determinants and substituting in (\ref{eq2}) and (\ref%
{eq3}) the result follows.
\end{proof}

\ 

In particular, if $\kappa =\lambda =\mu =\nu $ then the expressions in the
denominators of (\ref{eq1}) are equal to: 
\[
\frac{\left( 2\lambda ^{2}+2\lambda +1\right) ^{3}}{\lambda ^{2}(\lambda
+1)^{2}}, 
\]

while the expressions in the numerators\ of (\ref{eq1}) are equal to: 
\[
\frac{\left( 2\lambda ^{2}+2\lambda +1\right) ^{2}}{\lambda ^{2}(\lambda
+1)^{2}}. 
\]

Hence, we have the following;

\begin{corollary}
If the sides $BC,CD,DA,AB$ of a parallelogram $ABCD$ are divided at $K,L,M,N$
$\ $in the ratio $\lambda :1,$ then the cevians $AK,BL,CM,DN$ form a
quadrilateral whose area is 
\[
\frac{1}{2\lambda ^{2}+2\lambda +1} 
\]%
times that of $ABCD.$
\end{corollary}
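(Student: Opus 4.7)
The plan is to obtain the corollary by direct specialization of the preceding theorem. Setting $\kappa=\lambda=\mu=\nu$ in the formula (\ref{eq1}), I expect the expression to collapse dramatically because each of the six factors appearing in the numerators and denominators will turn out to be a scalar multiple of the same quantity $2\lambda^{2}+2\lambda+1$. So the strategy is just to carefully simplify each factor in (\ref{eq1}), keeping a common form, and then read off the final ratio.

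More concretely, I first substitute $\kappa=\lambda=\mu=\nu$ in each of the three denominator factors. For instance, $2+\tfrac{1}{\lambda}-\tfrac{1}{1+\mu}$ reduces, after placing over the common denominator $\lambda(1+\lambda)$, to $\tfrac{2\lambda^{2}+2\lambda+1}{\lambda(1+\lambda)}$; likewise $1+\mu+\tfrac{\nu\mu}{1+\nu}$ simplifies to $\tfrac{2\lambda^{2}+2\lambda+1}{1+\lambda}$, and $1+2\kappa+\tfrac{\kappa+1}{\nu}$ to $\tfrac{2\lambda^{2}+2\lambda+1}{\lambda}$. Multiplying these three gives the claimed $\tfrac{(2\lambda^{2}+2\lambda+1)^{3}}{\lambda^{2}(\lambda+1)^{2}}$, and the third factor of the denominator in the $r_{2}$ term ($1+\kappa+\tfrac{\lambda\kappa}{1+\lambda}$) collapses to the same shape, so both denominators coincide. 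I will do exactly the analogous computation for the four numerator factors; each of them simplifies to $\tfrac{2\lambda^{2}+2\lambda+1}{\lambda(1+\lambda)}$, so the product of the two numerator factors in either of the two terms of (\ref{eq1}) equals $\tfrac{(2\lambda^{2}+2\lambda+1)^{2}}{\lambda^{2}(\lambda+1)^{2}}$.

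Finally, cancelling these gives $r_{1}=r_{2}=\tfrac{1}{2\lambda^{2}+2\lambda+1}$, so $\tfrac{1}{2}(r_{1}+r_{2})=\tfrac{1}{2\lambda^{2}+2\lambda+1}$, which is the claimed area ratio. There is no conceptual obstacle here; the whole task is bookkeeping. The only thing to watch out for is arithmetic consistency across the six different factors (all sharing the palindrome $2\lambda^{2}+2\lambda+1$ after clearing denominators), so I will group common subexpressions like $(1+\lambda)^{2}+\lambda^{2}=2\lambda^{2}+2\lambda+1$ from the start to avoid algebraic slips and to make the symmetry visible.
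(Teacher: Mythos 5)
Your proposal is correct and follows essentially the same route as the paper: the author likewise obtains the corollary by setting $\kappa=\lambda=\mu=\nu$ in (\ref{eq1}) and observing that the denominator products reduce to $(2\lambda^{2}+2\lambda+1)^{3}/\bigl(\lambda^{2}(\lambda+1)^{2}\bigr)$ while the numerator products reduce to $(2\lambda^{2}+2\lambda+1)^{2}/\bigl(\lambda^{2}(\lambda+1)^{2}\bigr)$, giving the stated ratio. Your factor-by-factor simplifications all check out.
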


Substituting $\lambda =\frac{1}{p-1},p\geq 2$, we get the equivalent
expression $\frac{p^{2}-2p+1}{p^{2}+1},$ which was discovered by M. De
Villiers \cite{Villiers}. He did not provide any proof but pointed out the
following: \ "Since a square is affinely equivalent to a parallelogram, the
easiest way to derive and prove this formula is to consider the special case
of a square".

We leave to the reader to see what happens in another particular case of the
theorem: if $\kappa =\mu $ and $\lambda =\nu .$

Finally, the fascinating theorem of Routh-Steiner attracted the interest of
mathematics education researchers, probably because of the use of a dynamic
geometry software to rediscover the theorem, but it still attracts the
interest of mathematics researchers as well.

\bigskip

\qquad

\bigskip

\FRAME{ftbphFU}{5.9395in}{3.4169in}{0pt}{\Qcb{The areas of the triangles $%
PA_{2}A_{3},PA_{3}A_{1},PA_{1}A_{2}$ are proportional to the barycentric
coordinates $t_{1},t_{2},t_{3},$ respectively. }}{\Qlb{triangle-fig}}{%
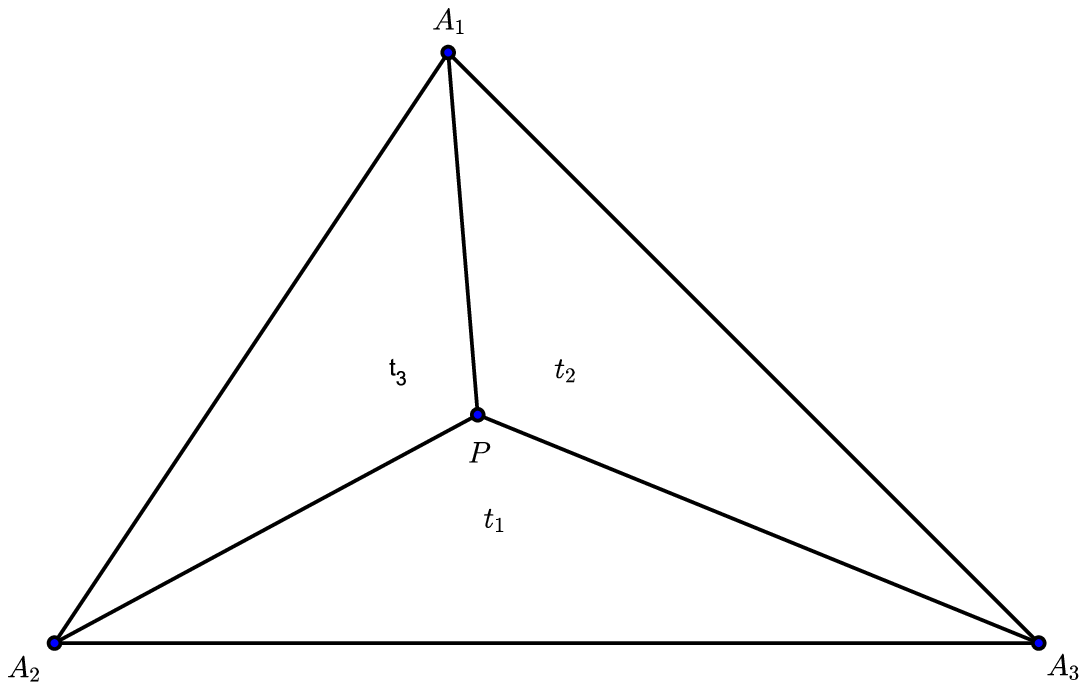}{\special{language "Scientific Word";type
"GRAPHIC";maintain-aspect-ratio TRUE;display "USEDEF";valid_file "F";width
5.9395in;height 3.4169in;depth 0pt;original-width 7.9718in;original-height
4.574in;cropleft "0";croptop "1";cropright "1";cropbottom "0";filename
'../../../../Elias/Marion Walter theorem/triangle.eps';file-properties
"NPEU";}}

\FRAME{ftbpFU}{6.4446in}{3.6806in}{0pt}{\Qcb{The ratio of the area of any
sub-polygon to the area of the whole triangle depends only on $\left\{ 
\protect\lambda _{i,j}\right\} $}}{\Qlb{triangular-network}}{%
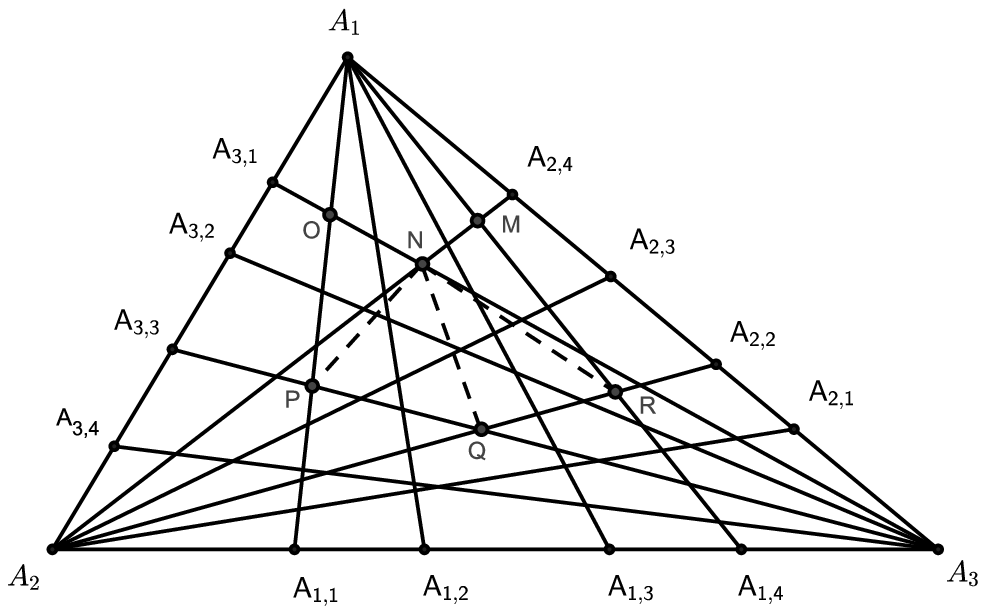}{\special{language "Scientific Word";type
"GRAPHIC";maintain-aspect-ratio TRUE;display "USEDEF";valid_file "F";width
6.4446in;height 3.6806in;depth 0pt;original-width 7.9044in;original-height
4.5031in;cropleft "0";croptop "1";cropright "1";cropbottom "0";filename
'../../../../Elias/Marion Walter
theorem/triangle-general-divisions.eps';file-properties "NPEU";}}

\FRAME{ftbpFU}{7.3613in}{4.2047in}{0pt}{\Qcb{Dividing each side in the
ratios $1:\protect\lambda :1$}}{\Qlb{1-lampda-1}}{%
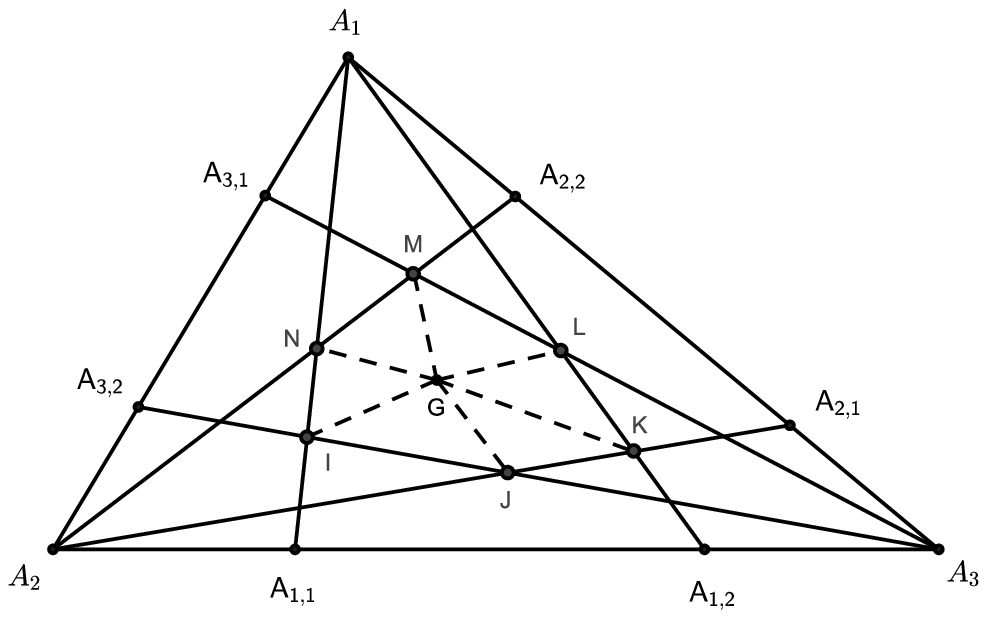}{\special{language "Scientific Word";type
"GRAPHIC";maintain-aspect-ratio TRUE;display "USEDEF";valid_file "F";width
7.3613in;height 4.2047in;depth 0pt;original-width 7.9044in;original-height
4.5031in;cropleft "0";croptop "1";cropright "1";cropbottom "0";filename
'../../../../Elias/Marion Walter
theorem/triangle-division-1-lampda-1.eps';file-properties "NPEU";}}

\FRAME{ftbpFU}{7.0958in}{4.2774in}{0pt}{\Qcb{The sides of a parallelogram
are divided in the ratios $\protect\kappa :1,\protect\lambda :1,$ $\protect%
\mu :1,\protect\nu :1$}}{\Qlb{parallel-fig}}{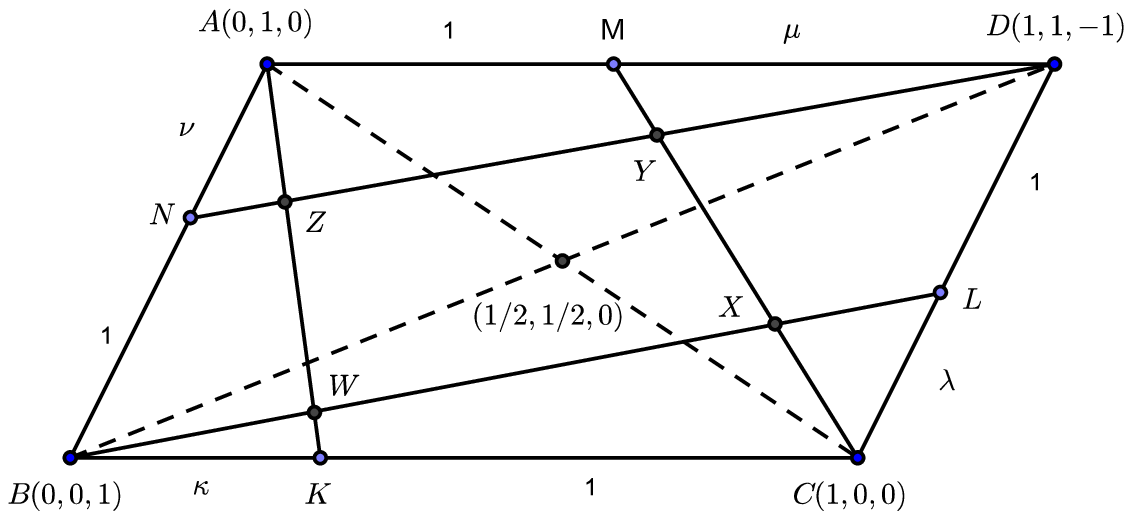}{\special%
{language "Scientific Word";type "GRAPHIC";maintain-aspect-ratio
TRUE;display "USEDEF";valid_file "F";width 7.0958in;height 4.2774in;depth
0pt;original-width 7.9718in;original-height 4.7971in;cropleft "0";croptop
"1";cropright "1";cropbottom "0";filename '../../../../Elias/Marion Walter
theorem/parallelogram.eps';file-properties "NPEU";}}


\begin{thebibliography}{9}
\bibitem{Benyi-Curgus} A. B\'{e}nyi and B. \'{C}urgus, A Generalization of
Routh's Triangle Theorem, \textit{The American Mathematical Monthly} \textbf{%
9 }(2013) 841-846.

\bibitem{Coxeter} H. S. M. Coxeter, \textit{Introduction to Geometry},
second edition. Wiley, New York, 1989.

\bibitem{Morgan} R. Morgan, Reader Reflections: No restrictions needed, 
\textit{Mathematics teacher }\textbf{87} (1994) 726, 743 .

\bibitem{Routh} E. J. Routh, A Treatise on Analytical Statics with Numerous
Examples, Vol. 1, second edition. Cambridge

University Press, 1896.

\bibitem{Steiner} J. Steiner, Gesammelte Werke, Vol. 1, Reimer, Berlin, 1882.

\bibitem{Villiers} M. De Villiers, Feedback: Feynman's Triangle, \textit{The
Math. Gazette} \textbf{89} (2005) 107.

\bibitem{Walter} M. Walter, Reader Reflections: Marion's theorem, \textit{%
Mathematics teacher }\textbf{86} (1993) 619.

\bibitem{Watanabe} T. Watanabe, R. Hanson and F. D. Nowosielski, Morgan's
Theorem, \textit{Mathematics teacher }\textbf{89} (1996) 420-423.
\end{thebibliography}
\end{document}